\theoremstyle{plain}
\newtheorem{thm}{Theorem}[section]
\newtheorem{lemma}[thm]{Lemma}
\newtheorem{cor}[thm]{Corollary}
\newtheorem{prop}[thm]{Proposition}
\theoremstyle{definition}
\newtheorem{defn}[thm]{Definition}
\newtheorem{example}[thm]{Example}
\numberwithin{equation}{section}
\newcommand{\interior}[1]{{\kern0pt#1}^{\mathrm{o}}}
\begin{document}
	
	\title[Menger Convexity and Fixed Point Results]
	{Menger Convexity and Fixed Point Results}
	
		\author[A. Gupta]{Ajit Kumar Gupta$^{1 *}$}
	\address{$^1$Department of Applied Science and Humanities\\ Sankalchand Patel College of Engineering\\ Visnagar 384315\\ India}
	\email{emailstoajit@gmail.com, ajitguptahm\_spce@spu.ac.in}
	
	\author[S. Mukherjee]{Saikat Mukherjee{$^{2}$}}
	\address{$^2$Department of Mathematics\\ National Institute of Technology Meghalaya\\ Sohra 793108\\ India}
	\email{saikat.mukherjee@nitm.ac.in}
	$\thanks{*Corresponding author}$
	\subjclass[2020]{52A05, 47H10, 47H09}
	
	\keywords{Menger convex metric space, Takahashi convex metric space, Hausdorff metric, nested sequence, fixed point, ($\alpha,\beta$)-generalized hybrid mapping}
	
	\begin{abstract}
		Shimizu and Takahashi proved that every decreasing sequence of nonempty, bounded, closed, convex subsets of a complete, uniformly Takahashi convex metric space has nonempty intersection. It is well known that the Menger convexity is a generalization of the Takahashi convexity. In this article, we acquire a nonempty intersection property, in terms of the Hausdorff metric, for Menger convex metric spaces, that also provides a class of reflexive Menger convex spaces. We introduce a generalization of $(\alpha, \beta)-$generalized hybrid mappings, and using the obtained nonempty intersection property we derive the fixed point results for this generalized mapping defined on Menger convex spaces.
	\end{abstract}
	
	\maketitle

\section{Introduction}
The concept of convex structure on a space is not limited only to a vector space but the researchers have extended it to the metric spaces, even to the topological spaces, in recent years. In 1970, Takahashi (see \cite{wt70}) introduced a convex structure on a metric space that is a generalization of the convex structure in the normed spaces. Earlier to it, in 1928, Menger (see \cite{km28}) proposed a concept of convexity in metric spaces, which is known as Menger convexity and is a generalization of the Takahashi's convex structure. Moreover, the metrically convex structure (see \cite{am04}) is weaker than the Menger's convex structure. In 1997, Taskovic (see \cite{mr97, sn03, mr08}) introduced a convex structure on the topological spaces, which is known as general convex structure and is weaker than Takahashi's convexity.

Several remarkable nonempty intersection properties for the metric spaces with some convex structure have been discovered by the researchers.  Klee's Theorem (see \cite{vl56}) says that every infinite dimensional normed space has a decreasing sequence of unbounded but linearly bounded, closed, convex subsets with nonempty intersection. Smulian's theorem (see \cite{nc12}) characterizes the reflexivity of normed spaces.  According to it, a normed space $X$ is reflexive if and only if every decreasing sequence of nonempty, closed, bounded, convex subsets of $X$ has nonempty intersection. Further, in a more general convex structure i.e. in a complete uniformly Takahashi  convex metric spaces $Y$, Shimizu and Takahashi (see \cite{ts96}) have found that, every decreasing sequence of nonempty, bounded, closed, convex subsets of $Y$ has nonempty intersection.

Kocourek et al., in \cite{pk10}, have shown that for a given nonempty, closed, convex subset $C$ of a Hilbert space, an $(\alpha, \beta)$-generalized hybrid mapping $T:C\to C$ has a fixed point if and only if the sequence $\{T^nx\}$ is bounded for some $x\in C$. Later, Phuengrattana et al., in \cite{wp14}, extended this result to Takahashi convex structure.

However, no researchers have studied the nonempty intersection properties and fixed point results for $(\alpha, \beta)$-generalized hybrid mapping in Menger convex structure which is more general than Takahashi convex structure. In this article, we obtain nonempty intersection properties for Menger convex metric spaces, which also imply that the Menger convex metric spaces with a certain property are reflexive. 
 We introduce a mapping weaker than the $(\alpha, \beta)$-generalized hybrid mappings, and using our obtained results, we derive fixed point results for this introduced mapping, defined on a nonempty, closed, convex subset of a Menger convex metric space.
 
  The manuscript is organized as follows. Section \ref{MengerPrelim} contains the preliminaries required for the discussion in subsequent sections. In Section \ref{TakCon}, we discuss the Takahashi convexity in the context of Menger convexity. In Section \ref{NIPMB'}, we derive some nonempty intersection results for Menger convex metric spaces. In Section \ref{FPR}, we introduce a mapping which is a generalization of $(\alpha, \beta)-$generalized hybrid mappings; and using the results obtained in the previous sections, we acquire the fixed point results for this introduced mapping.

\section{Preliminaries}\label{MengerPrelim}
	
\begin{defn}
	Given a metric space $(X,d)$, a mapping $W:X^2\times [0,1]\to X$ is said to be a \textit{Takahashi convex structure}  on $X$ if, $\forall~ x,y,u\in X$ and $t \in [0,1]$, we have
	$d(u,W(x,y,t))\leq t d(u,x)+(1-t)d(u,y)$. The space $X$ together with a convex structure $W$ is called a \textit{Takahashi convex metric space}.
\end{defn}
Normed spaces and CAT(0) spaces (see \cite{sm09}, \cite{pc06}, \cite{sd08}) are Takahashi convex metric spaces.

\begin{defn}
	A Takahashi convex metric space $X$ is said to be \textit{uniformly convex} (\hspace{1sp}\cite{ts96}) if, for each $\epsilon>0$ there is a $\delta(\epsilon)>0$ such that for $c>0$, $x,y,z \in X$ with $d(z,x)\leq c, d(z,y)\leq c$ and $d(x,y)\geq c\epsilon$, we have $d(z,W(x,y,1/2))\leq c(1-\delta)$.
\end{defn}

\begin{defn}
	A metric space $(X,d)$ is said to be \textit{Menger convex} (\hspace{1sp}\cite{ib05, km28}) if for each $x,y\in X$ and $0< t < 1$, the set $B[x,td(x,y)]\cap B[y,(1-t)d(x,y)]\neq \emptyset $, where $B[x,r]$ denotes the set $\{y\in X:d(x,y)\leq r\}$, $r\geq 0$.
\end{defn}
 
 If for some $x,y$ in a Menger convex metric space and for some $t\in (0,1)$, the set $B[x,td(x,y)]\cap B[y,(1-t)d(x,y)]$ is singleton, then we denote this set by $m(x,y,t)$.

 Using Lemma 4.7 in \cite{ag23}, one can replace the set $B[x,td(x,y)]\cap B[y,(1-t)d(x,y)]$ with the set $S[x,r]\cap S[y,d(x,y)-r]$ in the above definition, where $S[x,r]=\{y\in X: d(x,y)=r\}$, $0\leq r \leq d(x,y)$.
 
 \begin{defn}
 	A Menger convex metric space $X$ is said to have the \textit{property $(A)$}, if for each $x,y\in X$ and $0< t < 1$, the set $m(x,y,t)$ exists.
 \end{defn}
 

\begin{defn}
		A subset $A$ in Takahashi convex metric space is said to be \textit{convex} if $W(x,y,t)\in A$ for all $x,y\in A$, $t\in [0,1]$. 
\end{defn}
\begin{defn}
	A subset $A$ in Menger convex metric space is said to be \textit{convex} if, $\forall x,y\in A$ and $0<r<d(x,y)$, the set $B[x,r]\cap B[y,d(x,y)-r]\subset A$.
\end{defn}

Closed balls in a Takahashi convex metric space are convex. But closed balls in a Menger convex metric space need not be convex; they are convex if the space has the property $(A)$.
There are some Takahashi convex metric spaces which don't have the property $(A)$, for example, the normed spaces $(l_1, \|\cdot\|_1)$ and $(l_{\infty},\|\cdot\|_{\infty})$ do not have the property $(A)$ (see the examples \ref{l1dnhpa} and \ref{l_inftydnhpa}); and in these normed spaces, when treated as Menger convex metric spaces, the closed ball $B[0,1]$ is not convex.

\begin{defn}
	A Menger convex metric space $X$ is said to satisfy the \textit{property $(B)$} (see \cite{ib05}) if, $  \forall~ x, y, z \in X,\ t \in (0, 1)$, we have $d(m(x,y,t),$ $ m(z,y,t)) \leq td(x,z)$.
\end{defn}

\begin{defn}\label{dmu1} 
	A Menger convex metric space $(X,d)$ with the property $(A)$ is said to be \textit{uniformly convex} (\hspace{1sp}\cite{ib05}) if for each $\epsilon>0$ there exists a $\delta(\epsilon) >0$ such that $\forall~ r>0$ and $\forall~ x, y, z \in X$ with $d(z,x) \leq r,\ d(z,y) \leq r$ and $d(x,y) \geq r\epsilon$, we have
	$d(z,m(x,y,\frac{1}{2}))\leq (1-\delta)r.$
\end{defn}

\begin{defn}
	A metric space $X$ with a convex structure is said to be \textit{reflexive} (\hspace{1sp}\cite{re11}) if each decreasing sequence of nonempty closed bounded convex subsets of $X$ has nonempty intersection.
\end{defn}

A reflexive Banach space is an example of reflexive metric space. Other examples for reflexive spaces include complete CAT(0) spaces, and complete uniformly Takahashi convex metric spaces (see \cite{ts96}). In Section \ref{NIPMB'}, we will have another example of reflexive spaces, with Menger convex structure.

	\begin{defn} Given a pair of nonempty subsets $A,B$ of a metric space $(X,d)$, the {\it Hausdorff distance} (\cite{ag23}) between them is defined as $$H(A,B)=\max\left\{\sup\limits_{x\in A}d(x,B), ~\sup\limits_{x\in B}d(x,A)\right\},$$ where $d(x,A)=\inf\limits_{y\in A}d(x,y)$.
	\end{defn}
The distance function $H$ is a metric, provided the sets $A,B$ are closed and bounded.
\section{Takahashi Convexity}\label{TakCon}

In this section we explore Takahashi convexity in the perspective of Menger convexity. 
\begin{thm}\label{uwcmspa} 
	Uniformly Takahashi convex metric spaces have the property $(A)$.
\end{thm}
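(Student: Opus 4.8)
The plan is to separate the statement into an (easy) existence part and a (genuine) uniqueness part, with uniform convexity entering only in the latter. Fix $x,y \in X$ and $t \in (0,1)$. We may assume $x \neq y$, since otherwise $d(x,y)=0$ and the set $B[x,(1-t)d(x,y)]\cap B[y,td(x,y)]$ is trivially $\{x\}$. Write $c=(1-t)d(x,y)$, which is strictly positive because $t<1$ and $x\neq y$. For existence I would take $w=W(x,y,t)$: applying Definition \ref{dwc} with $u=x$ and then with $u=y$ gives $d(x,w)\le (1-t)d(x,y)$ and $d(y,w)\le td(x,y)$, so $w$ lies in the intersection and it is nonempty.

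The heart of the argument is uniqueness. Suppose $z_1,z_2$ both lie in $B[x,(1-t)d(x,y)]\cap B[y,td(x,y)]$. The first key observation is a rigidity phenomenon coming from the triangle inequality: from $d(x,z_i)\le (1-t)d(x,y)$ and $d(y,z_i)\le td(x,y)$ together with $d(x,y)\le d(x,z_i)+d(z_i,y)$, the two upper bounds sum exactly to $d(x,y)$, so equality must hold throughout, forcing $d(x,z_i)=c$ and $d(y,z_i)=td(x,y)$ exactly, for $i=1,2$. Next I would set $m=W(z_1,z_2,\tfrac12)$ and run the same argument on $m$: Definition \ref{dwc} with $u=x$ and $u=y$ yields $d(x,m)\le c$ and $d(y,m)\le td(x,y)$, and the identical triangle-inequality rigidity then forces $d(x,m)=c$.

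Finally I would invoke (Takahashi's) uniform $W$-convexity to reach a contradiction if $z_1\neq z_2$. Taking the centre to be $x$ and the radius to be $c$, we have $d(x,z_1)\le c$ and $d(x,z_2)\le c$; if $d(z_1,z_2)>0$, set $\epsilon=d(z_1,z_2)/c$ so that $d(z_1,z_2)\ge c\epsilon$, and uniform convexity supplies $\delta=\delta(\epsilon)>0$ with $d\bigl(x,W(z_1,z_2,\tfrac12)\bigr)\le c(1-\delta)<c$. Since $W(z_1,z_2,\tfrac12)=m$, this contradicts $d(x,m)=c$, and therefore $z_1=z_2$, which establishes property $(A)$.

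The step I expect to require the right idea — rather than routine computation — is choosing the correct centre for the uniform-convexity inequality. The contradiction only materialises once one notices that both candidate points \emph{and} their midpoint sit at \emph{exactly} distance $c$ from $x$, so that the strict contraction $c(1-\delta)$ guaranteed by uniform convexity directly clashes with the triangle-inequality rigidity $d(x,m)=c$. The remaining distance estimates are straightforward applications of Definition \ref{dwc}, and the passage from $W$-convexity to the Menger-convex setting needed to even speak of property $(A)$ is already recorded in the preliminaries.
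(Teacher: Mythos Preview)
Your proof is correct and follows essentially the same route as the paper's: assume two distinct points in the intersection and obtain a contradiction by applying uniform $W$-convexity to their $W$-midpoint. The only cosmetic difference is in the endgame---the paper applies the uniform-convexity estimate from \emph{both} centres $x$ and $y$ and adds the resulting inequalities to reach $d(x,y)<d(x,y)$, whereas you apply it from $x$ alone after first pinning down $d(x,m)=c$ via the triangle-inequality rigidity; either variant closes the argument.
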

\begin{proof} Consider a pair of distinct points $x,y$ in a uniformly Takahashi convex metric space $X$.  If possible, suppose, for some $t\in (0,1)$, $z_1,z_2\in B[x,td(x,y)]\cap B[y,(1-t)d(x,y)]$ are two distinct points. By Lemma 4.7 in \cite{ag23}, we have $z_1,z_2\in S[x,td(x,y)]\cap S[y,(1-t)d(x,y)]$. Now, we can choose an $\epsilon$ such that
	$$0<\epsilon<\min \bigg\{\frac{d(z_1,z_2)}{td(x,y)},\frac{d(z_1,z_2)}{(1-t)d(x,y)}\bigg\}.$$
	This implies $$d(z_1,z_2)>td(x,y)\epsilon>0,$$
	$$d(z_1,z_2)>(1-t)d(x,y)\epsilon>0.$$
	We have,
	$$d(x,z_1)=d(x,z_2)=td(x,y),$$
	$$d(y,z_1)=d(y,z_2)=(1-t)d(x,y).$$
	Now, by the uniform convexity, for the chosen $\epsilon>0$ $\exists~ \delta>0$ such that
	$$d(x,W(z_1,z_2,1/2))\leq td(x,y)(1-\delta),$$
	$$d(y,W(z_1,z_2,1/2))\leq (1-t)d(x,y)(1-\delta).$$
	Adding these two inequalities we get:
	$$d(x,y)\leq d(x,W(z_1,z_2,1/2))+d(y,W(z_1,z_2,1/2))\leq (1-\delta)d(x,y),$$
	which implies $d(x,y)<d(x,y)$ or $d(x,y)\leq 0$, and either of these is a contradiction. Hence, the theorem is proved.
\end{proof}

We shall use Theorem \ref{uwcmspa} to derive a fixed point result in Section \ref{FPR}. 

\begin{cor}
	Uniformly Takahashi convex metric spaces are uniformly Menger convex.

\end{cor}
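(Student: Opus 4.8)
The plan is to obtain this corollary as an immediate consequence of the two results that directly precede it. First I would recall, exactly as in the proof of Corollary \ref{ucbspa}, the assertion from \cite{ts96} that every uniformly convex Banach space is a uniformly $W$-convex metric space: the linear convex combination $W(x,y,\lambda)=\lambda x+(1-\lambda)y$ furnishes a convex structure on the space, and the uniform convexity of the norm translates verbatim into the uniform convexity condition of this $W$-structure (with the constant $c$ playing the role of the common radius). Having placed the Banach space inside the class of uniformly $W$-convex metric spaces, I would then simply invoke Theorem \ref{uwcmsumcms}, which states that every uniformly $W$-convex metric space is uniformly Menger convex. Concatenating these two implications yields the claim at once.

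The only point that requires a word is that the hypothesis of Theorem \ref{uwcmsumcms} genuinely applies, i.e.\ that a uniformly convex Banach space really is uniformly $W$-convex; but this is precisely the cited fact from \cite{ts96}, so nothing new must be checked. It is worth emphasizing where the substantive work has already been carried out: Theorem \ref{uwcmspa} supplies the property $(A)$ demanded by the definition of uniform Menger convexity (Definition \ref{dmu1}), and Theorem \ref{uwcmsumcms} upgrades the $W$-uniform-convexity estimate to the Menger uniform-convexity estimate by identifying the midpoint $m(x,y,\tfrac12)$ with $W(x,y,\tfrac12)$. Consequently the present corollary carries no independent obstacle; its entire content is the combination of the cited embedding of uniformly convex Banach spaces into uniformly $W$-convex metric spaces with the transfer theorem just established.

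I expect no difficulty at all in executing this plan, since the corollary is a pure composition of two prior implications. The one judgment call is stylistic rather than mathematical: whether to spell out the identification $m(x,y,\tfrac12)=W(x,y,\tfrac12)$ again, or to rely on its having been secured inside Theorem \ref{uwcmsumcms}. I would take the latter course and keep the proof to a single sentence citing Theorem \ref{uwcmsumcms} together with \cite{ts96}, mirroring the terse style already used for Corollary \ref{ucbspa} and the subsequent $L^p$, $l^p$ corollary.
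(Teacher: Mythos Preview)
Your proposal is correct and matches the paper's own treatment: the corollary is left without an explicit proof there, being an immediate consequence of Theorem~\ref{uwcmsumcms} together with the cited fact from \cite{ts96} that uniformly convex Banach spaces are uniformly $W$-convex. Your plan to give a one-sentence proof citing these two ingredients is exactly what the paper intends.
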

\begin{cor}\label{ucbspa}
	Uniformly convex normed spaces have the property $(A)$.
\end{cor}

Corollary \ref{ucbspa} implies that, given $p\in (1,\infty), ~ a,b \in \mathbb R$, the normed spaces $l_p$ and $L_p[a,b]$  satisfy the property $(A)$ as these spaces are uniformly convex. The normed spaces $L_1[a,b]$, $L_\infty[a,b]$, $l_1$, and $l_\infty$ do not have the property $(A)$, which is evident from the following examples. We shall take the help of Lemma 4.7 in \cite{ag23}, to discuss the following examples.

\begin{example}\label{L1dnhpa}
	Let $f(x)=0$ and $g(x)=2$ be in the normed space $X=(L_1[0,1],\|\cdot\|_1)$, and $t=\frac{1}{2}.$ Let $a\in (0,1)$ and a mapping $h_a$ on $[0,1]$ be defined as: $h_a(x)= \frac{-2}{a}x+2, x\in [0,a]$, and $h_a(x)=\frac{2}{1-a}(x-1)+2, x\in [a,1]$. Then, the set $\{h_a\in X \ |\ a \in (0,1)\}\subset S[f,td(f,g)]\cap S[g,(1-t)d(f,g)]$. Thus, the normed space $X$ does not have the property $(A)$.
\end{example}
\begin{example}\label{L-inftydnhpa}
	Let $f(x)=0$ and $g(x)=2x^2$ be in the normed space $X=(L_{\infty}[0,1], \|\cdot\|_\infty)$, and $t=\frac{1}{2}$. Let $P=\{p:[0,\frac{1}{\sqrt 2}] \to [0,1]\ |$ p is a continuous mapping with $p(0)=0, p(\frac{1}{\sqrt 2})=\frac{1}{2}\}$  and a mapping $h_p$ on $[0,1]$ be defined as:
	$h_p(x)=p(x), x\in [0,\frac{1}{\sqrt 2}]$, and $h_p(x)=x^2, x\in [\frac{1}{\sqrt 2},1]$. Then, the set $\{h_p\in X\ |\ p \text{ is in } P\}\subset S[f,td(f,g)]\cap S[g,(1-t)d(f,g)]$. Hence, the space $X$ does not have the property $(A)$.
\end{example}

\begin{example}\label{l1dnhpa}
	Let  $ n \in \mathbb N, n\geq 2$, be fixed and $x=(0,0,0,...)$, $ y=(\underbrace{2/n, 2/n,...,2/n,}_\text{n times}$ $0,0,0,...)\in (l_1, \|\cdot\|_1)$.
	For some given $1\leq i,j\leq n, i\neq j$, let $a_{ij}=(x_1,x_2,...,x_i,...,x_n,  $ $0,0,0,...)$ such that $  x_i=0, x_j=2/n,\ \text{and}\ x_p=1/n \ \forall\ p\neq i,j.$
	Then, the set $\{a_{ij}\in l_1:1\leq i,j\leq n, i \neq j\}\subset S[x, d(x,y)/2]\cap S[y, d(x,y)/2].$
	Thus, the space $(l_1, \|\cdot\|_1)$ does not have the property $(A)$.
	
\end{example}

\begin{example} \label{l_inftydnhpa}
	Let $x=(0,0,0,...)\ \text{and}\ y=(2,0,0,0,...)$ be in the normed space $(l_{\infty},\|\cdot\|_{\infty}).$ Then, for $t=\frac{1}{4}$,
	$S[x,t_1d(x,y)]\cap S[y,(1-t_1)d(x,y)]= \{(x_i)\in X: x_1=\frac{1}{2},\ -\frac{1}{2}\leq x_i\leq \frac{1}{2}\ \forall\  i\geq 2\}.$
	This implies, the space $(l_\infty, \|\cdot\|_\infty)$ does not have the property $(A)$.
	
\end{example}

\begin{prop}\label{mtvs}
	Consider a real vector space $X$ endowed with a metric $d$, and with a Takahashi convex structure $W(x,y,\lambda)=\lambda x +(1-\lambda)y$. If the metric $d$ is translation invariant, then the space $(X,d)$ is a normed space.
\end{prop}
\begin{proof} By the hypothesis we have,
	$d(0,\lambda x)\leq \lambda d(0,x), \forall~ x \in X, \lambda \in [0,1].$ We shall prove $d(0,\alpha x) = |\alpha|d(0,x), \forall x\in X, \alpha \in \mathbb R$. Let $p \in \mathbb R$. If $p\geq 1$, then $p = n+r,\ \text{for some}\ n\in \mathbb N , r\in[0,1)$. For an $x\in X$, we have
	\begin{equation*}
	\begin{aligned}[b]
	d(0,(p -1)x)
	& \leq d(0,x)+d(x,(p -1)x)  \\
	& =  d(0,x)+d(0,(p -2)x) \\
	& \leq  2d(0,x)+d(0,(p-3)x) \\
	&\ ... \\	
	&\leq (n-1)d(0,x)+d(0,rx) \\
	&\leq (n-1)d(0,x)+rd(0,x) \\
	&= (p -1)d(0,x).
	\end{aligned}
	\end{equation*}
	This implies,
	\begin{equation}
	d(0,q x)\leq qd(0,x),\ \forall~ q \geq 0. \label{eqn.1}
	\end{equation}
	Since the metric $d$ is translation invariant, using (\ref{eqn.1}), for $p \leq 0$, we have,
	$d(0, p x)= d(0, -p x) \leq |p|d(0,x).$
	Thus, finally we have
	$d(0,p x)\leq |p|d(0,x), \forall x\in X, p \in \mathbb R.$ Now, for $\alpha \in \mathbb R \setminus \{0\}$, we have
	$$	d(0,x)=d(0,\frac{\alpha}{\alpha}x)\leq \frac{1}{|\alpha|}d(0,\alpha x).$$
	And therefore,
	$$|\alpha|d(0,x)\leq d(0,\alpha x)\leq |\alpha|d(0,x), \forall \alpha \in \mathbb R.$$
\end{proof}

\section{Nonempty Intersection Results}\label{NIPMB'}

Throughout this section, by $X$ we mean a Menger convex metric space $(X,d)$.

	We extend the domain of the property $(B)$ defined in Section \ref{MengerPrelim}, as follows.
	\begin{defn}\label{dpb'}
		$X$ is said to satisfy the \textit{property $(B')$} if $\forall~ x, y, z\in X, t\in (0,1)$, we have
		$$H(c(x,y,t),c(z,y,t))\leq t~d(x,z),$$
		where $c:X^2 \times (0,1)\to C(X)$ is defined as
		$c(x,y,t)=B[x,td(x,y)]\cap B[y,(1-t)d(x,y)].$
	\end{defn}
	
	\begin{thm}\label{NIPB'}
		Consider a complete space $X$ with the property $(B')$. Then, for each decreasing sequence $\{K_n\}$ of nonempty, bounded, closed, convex subsets of $X$, we have
		$\bigcap\limits_{n=1}^\infty K_n\neq\emptyset$.
	\end{thm}
	\begin{proof}
		Consider a point $z_n\in K_n$, and a pair of points $x_{n+1}, y_{n+1}\in K_{n+1}$. Then, by the property $(B')$, we have $H\big(c(x_{n+1},y_{n+1},\frac{1}{3^{n+1}}),c(z_n,y_{n+1},$ $\frac{1}{3^{n+1}})\big)$ $\leq \frac{1}{3^{n+1}}d(x_{n+1},z_n)$ $\leq \frac{1}{3^{n+1}}\delta (K_1)$. Let us pick a point $z'_{n}\in c(z_n,y_{n+1},$ $\frac{1}{3^{n+1}})$. Then, there is a point, say $z_{n+1}$, in $c(x_{n+1},y_{n+1},\frac{1}{3^{n+1}})\subset K_{n+1}$ such that $d(z_n', z_{n+1})<\frac{1}{2^{n+1}}\delta(K_1)$. Now, $d(z_n,z_{n+1})\leq d(z_n,z_n')+d(z_n',z_{n+1})< (\frac{1}{3^{n+1}}+\frac{1}{2^{n+1}})\delta(K_1)$, which implies the sequence $\{z_n\}$ is Cauchy. And hence, its limit is in $\bigcap\limits_{n=1}^\infty K_n$.
	\end{proof}

	The above theorem also implies that the spaces $X$ with property $(B')$ are reflexive.
	
	It can be observed that, if the space $X$ has the property $(A)$, then the properties $(B)$ and $(B')$ are equivalent.
	
	\begin{cor}
		Consider a complete space $X$ with the properties $(A)$ and $(B)$. Then, for each decreasing sequence $\{K_n\}$ of nonempty bounded closed convex subsets of $X$, we have
		$\bigcap\limits_{n=1}^\infty K_n\neq\emptyset$.
	\end{cor}

	
	%

\section{Fixed Point Results}\label{FPR}
 
In this section, we will discuss the fixed point results for a mapping of Menger convex spaces which is weaker than the $(\alpha, \beta)-$ generalized hybrid mappings. It may be noted that the result for Takahashi convex metric space similar to Lemma \ref{u0b'} can be found in the work of Phuengrattana et al. (see \cite{wp14}).

\begin{defn} 
	Let $U$ be a convex subset of a Menger convex metric space $(X,d)$. Then, a mapping $f:U \to \mathbb R$ is said to be \textit{Menger convex function} (\hspace{1sp}\cite{vp79}) if $\forall~ x, y \in U, z \in B[x,td(x,y)]\cap B[y,(1-t)d(x,y)], t\in (0,1)$, we have
	$f(z)\leq (1-t) f(x)+tf(y).$
\end{defn}
The distance function $d$ in a Menger convex metric space $(X,d)$ is said to be Menger convex if $d(u,\cdot):X\to \mathbb R$ is a Menger convex function on $X$ for each $u\in X$. In a Takahashi convex metric space $(X,d)$ satisfying the property $(A)$, the metric $d$ is a Menger convex function; while in the Takahashi convex metric spaces $(l_1,\|\cdot\|_1)$ and $(l_{\infty},\|\cdot\|_{\infty})$, the mapping $d(u,\cdot)$ is not a Menger convex function.

\begin{defn}
	A real-valued function $f$ on a subset $U$ of a metric space $X$ is said to be $d$\textit{-coercive} if, whenever for a sequence $\{x_n\}_{n=1}^{\infty}\subset U$ and $x\in U$, $d(x_n,x)\to \infty$ as $n \to \infty$, then $f(x_n)\to \infty$ as $n \to \infty$.
\end{defn}

\begin{lemma}\label{u0b'}
	Let $U$ be a nonempty, closed, convex subset of a complete, Menger convex metric space $(X,d)$ and $f:U \to [0,\infty)$ be a continuous, d-coercive, Menger convex function. If the space $X$ satisfies the property $(B')$, then there exists a $u_0 \in U$ such that $f(u_0) = \inf\limits_{x\in U}f(x)$.
\end{lemma}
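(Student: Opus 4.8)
The plan is to realize $u_0$ as a point lying in every member of a decreasing family of sublevel sets and then to invoke Theorem \ref{keyB'}. Set $m=\inf\limits_{x\in U}f(x)$; since $f$ takes values in $[0,\infty)$, $m$ is finite and nonnegative. For each $n\in\mathbb{N}$ define the sublevel set
$$K_n=\{x\in U:\ f(x)\le m+\tfrac{1}{n}\}.$$
By the definition of the infimum every $K_n$ is nonempty, and clearly $K_{n+1}\subset K_n$. If the $K_n$ can be shown to meet the hypotheses of Theorem \ref{keyB'}, then $\bigcap\limits_{n=1}^\infty K_n\neq\phi$, and any $u_0$ in this intersection satisfies $f(u_0)\le m+\frac{1}{n}$ for all $n$, hence $f(u_0)\le m$; as $m$ is a lower bound this forces $f(u_0)=m=\inf\limits_{x\in U}f(x)$.

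First I would verify the structural conditions on each $K_n$. Closedness is immediate from the continuity of $f$ together with the closedness of $U$. Boundedness is where the d-coercivity enters: were some $K_n$ unbounded, I could pick $x_k\in K_n$ with $d(x_k,u)\to\infty$ for a fixed $u\in U$, whence $f(x_k)\to\infty$, contradicting $f(x_k)\le m+\frac{1}{n}$. Convexity follows from the Menger convexity of $f$: if $x,y\in K_n$ and $z\in B[x,td(x,y)]\cap B[y,(1-t)d(x,y)]$ with $t\in(0,1)$, then $f(z)\le(1-t)f(x)+tf(y)\le m+\frac{1}{n}$, so the defining intersection stays inside $K_n$. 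Finally, the interior of $K_n$ is nonempty because the relatively open set $\{x\in U:\ f(x)<m+\frac{1}{n}\}$ is nonempty (it contains points where $f$ is within $\frac{1}{2n}$ of $m$) and is contained in $K_n$.

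The hard part will be exactly this interior condition: Theorem \ref{keyB'} requires $\interior{K}_n\neq\phi$ in the ambient complete Menger convex metric space, whereas the open set just produced is only open relative to $U$, and $U$ may be ``thin'' in $X$, so that the $K_n$ can genuinely have empty interior in $X$. I would circumvent this by applying Theorem \ref{keyB'} with $U$ itself playing the role of the ambient space. To justify this I must check that $U$ inherits all the hypotheses: $U$ is complete, being closed in the complete space $X$; and $U$ is Menger convex, since for $x,y\in U$ the convexity of $U$ gives $B[x,td(x,y)]\cap B[y,(1-t)d(x,y)]\subset U$, which is nonempty by the Menger convexity of $X$. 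The crucial observation is that this same containment shows $c(x,y,t)=B[x,td(x,y)]\cap B[y,(1-t)d(x,y)]\subset U$ for all $x,y\in U$, so the operation $c$ and the Hausdorff distances computed inside $U$ coincide with those computed in $X$; consequently the property $(B')$ passes verbatim to $U$. Likewise each closed ball of $U$ is of the form $B[x,r]\cap U$, an intersection of two convex sets and hence convex in $U$.

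Relative to this ambient space $U$ the sets $K_n$ now have nonempty interior, so Theorem \ref{keyB'} applies and yields $\bigcap\limits_{n=1}^\infty K_n\neq\phi$; choosing $u_0$ in this intersection completes the argument as indicated in the first paragraph. I expect the routine part to be the four sublevel-set conditions, and the genuine subtlety to be the passage to $U$ as the working ambient space together with the verification that property $(B')$ and convexity of balls descend to it, which is what makes the interior hypothesis of Theorem \ref{keyB'} available.
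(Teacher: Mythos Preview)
Your proposal is correct and follows essentially the same sublevel-set strategy as the paper: define $K_n=\{x\in U:f(x)\le m+\tfrac1n\}$, check nonemptiness, closedness, convexity, and boundedness, then invoke Theorem~\ref{keyB'}. The paper obtains $\interior{K_n}\neq\phi$ by arguing that $\partial K_n\subset f^{-1}(m+\tfrac1n)$ and hence $K_{n+1}\subset\interior{K_n}$, but it does not distinguish between interior relative to $U$ and interior in $X$; your explicit passage to $U$ as the ambient complete Menger convex space, together with the verification that property $(B')$ and convexity of closed balls descend to $U$, is a genuine clarification that the paper leaves implicit.
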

\begin{proof}
	Let $\alpha=\inf\limits_{x\in U}f(x)$ and $K_n:=\{x\in U:f(x)\leq\alpha +\frac{1}{n}\},\ n \in \mathbb N$. The set $K_n$, $n\in \mathbb N$, is nonempty. For, if $K_n=\emptyset$ for some $n\in \mathbb N$, then $f(x)>\alpha +\frac{1}{n} \ \forall~ x \in U$, which implies $\alpha \neq \inf\limits_{x\in U}f(x)$. It is easy to see $K_n's$ are closed and convex. 
	The set $K_n$, $n\in \mathbb N$, is bounded as well. For, if $K_n$ is not bounded for some $n\in \mathbb N$, then there exists a sequence $\{x_m\}_{m=1}^\infty$ in $K_n$ such that $d(x_m,x)\to \infty$ for some $x\in K_n$. This implies $f(x_m)\to \infty$, which is a contradiction.
	Therefore by Theorem \ref{NIPB'},
	$\bigcap\limits_{n=1}^{\infty}K_n\neq \emptyset.$
	Let $u_0 \in \bigcap\limits_{n=1}^{\infty}K_n.$ Then
	$f(u_0)\leq \alpha +\frac{1}{n}~ \forall~ n$, which implies $f(u_0)\leq \alpha$, and hence $f(u_0)= \alpha.$	
\end{proof}

\begin{defn}
	A mapping $T: (X,d)\to (X,d)$ is said to be \textit{metrically nonspreading} (see \cite{fk18}) if $2d(Tx,Ty)^2\leq d(x,Ty)^2+d(Tx,y)^2,\forall~ x, y \in X.$
\end{defn}

\begin{defn}
	 Let $U$ be nonempty subset of a Menger convex metric space $(X,d)$. A mapping $T: U \to U$ is called \textit{$(\alpha, \beta)$-generalized hybrid} (see \cite{pk10}) if there are $\alpha, \beta \in \mathbb R$ such that
	$$\alpha ~d(Tx, Ty)^2 + (1-\alpha)~d(x,Ty)^2 \leq \beta~ d(Tx,y)^2 + (1-\beta)~d(x,y)^2$$
	for all $ x,y \in U$.
\end{defn}
The class of $(\alpha, \beta)-$ generalized hybrid mappings contains the classes of nonexpansive mappings, metrically nonspreading mappings, etc. We introduce a class of mappings that contains the $(\alpha, \beta)-$ generalized hybrid mappings.
\begin{defn}
	Let $U$ be nonempty subset of a Menger convex metric space $(X,d)$. A mapping $T: U \to U$ is called \textit{$(\alpha, \beta, \gamma)$-generalized hybrid} if there are $\alpha, \beta, \gamma \in \mathbb R$ such that
	$$\alpha ~d(Tx, Ty)^2 + (1-\alpha)~d(x,Ty)^2 +\gamma d(Ty,z)^2$$ $$\leq \beta~ d(Tx,y)^2 + (1-\beta+\gamma)~d(x,y)^2+(1+\gamma)d(y,z)^2$$
	 for all $ x,y,z \in U$.
\end{defn}

We notice that, if $y=z$, then $(\alpha, \beta,0)$-generalized hybrid is $(\alpha, \beta)$-generalized hybrid. Therefore, the class of $(\alpha, \beta, \gamma)$-generalized hybrid mappings contains the class of $(\alpha, \beta)$-generalized hybrid mappings. Further, an $(\alpha, \beta, \gamma)$-generalized hybrid map need not be  $(\alpha, \beta)$-generalized hybrid map. For instance, consider $U=\{(0,0,0),(2,0,0),(0,1,0)\}\subset \mathbb R^3$, and a mapping $T:U\to U$ defined as $T(0,0,0)=(0,1,0), T(2,0,0)=(2,0,0), T(0,1,0)=(0,1,0)$. Then $T$ is a $(1,1,1)$-generalized hybrid map on $U$. However, for no $\alpha, \beta\in \mathbb R$, $T$ is $(\alpha, \beta)$-generalized hybrid map; this is evident by choosing $x=(2,0,0), y=(0, 0, 0)$.

\begin{thm}\label{hybridB'}
	Let $U$ be a nonempty, closed, convex subset in a complete, Menger convex metric space $(X,d)$ with the property $(B')$, and the distance function $d$ be Menger convex. Then, an $(\alpha, \beta, \gamma)$-generalized hybrid $T:U\to U$ has a fixed point if and only if the sequence $\{T^nx\}$ is bounded for some $x\in U$.
\end{thm}
\begin{proof} Suppose the sequence $\{T^nx\}$ is bounded for some $x\in U$. We will prove that $T$ has a fixed point. Using Lemma 4.7 in \cite{ag23}, we define a mapping $f:U\to [0,\infty)$ as follows: $$f(y):=\limsup\limits_{n\to \infty} d\big(T^nx,a\big),  \ \text{where } a \in c(T^nx,y,t_1), \text{ for some fixed }t_1\in (0,1).$$
	 
	Let $u,v \in U$ and $ w \in S[u,td(u,v)]\cap S[v,(1-t)d(u,v)]$, where $t\in (0,1)$ is arbitrary. Then,
	\begin{align*}
	f(w)&=t_1\limsup\limits_{n\to \infty} d(T^nx,w)\\
	&\leq t_1\limsup\limits_{n\to \infty} \big[(1-t)d(T^nx,u)+td(T^nx,v)\big]\\
	&\leq t_1(1-t)\limsup\limits_{n\to \infty} d(T^nx,u)+t_1t\limsup\limits_{n\to \infty}d(T^nx,v)\\
	&=(1-t)f(u)+tf(v).
	\end{align*}
	Thus, $f$ is a convex mapping.
	
	By triangle inequality, we have 
	\begin{eqnarray*}
		t_1d(T^nx,u)\leq t_1[d(T^nx,v)+d(v,u)]< t_1d(T^nx,v) + d(u,v).
	\end{eqnarray*}
And therefore, $f(u) < f(v)+d(u,v)$. 	Similarly, we have $f(v) < f(u)+d(u,v).$ These inequalities together impliy that $f$ is uniformly continuous.

	Now, let $\{u_p\}_{p=1}^{\infty}$ be a sequence in $U$ such that $d(u_p,u) \to \infty$ for some $u \in U$.
	Since $d(u,u_p)\leq d(u, T^nx)+d(u_p,T^nx),$
	therefore we have $d(u_p,T^nx)\to \infty$ as $p\to \infty$, for all $n\in \mathbb N$. We claim: The sequence $\{\limsup_{n\to \infty}d(T^nx,u_p)\}_{p\in \mathbb N}$ is not bounded. For, if it is bounded by some $M\in \mathbb R$, then for all $p\in \mathbb N$ and $\epsilon>0$ there is $n_{p,\epsilon}\in \mathbb N$ such that $$d(T^{n_{p,\epsilon}}x,u_p)<\limsup_{n\to \infty}d(T^nx,u_p)+\epsilon\leq M+\epsilon.$$ By triangle inequality, $d(Tx,u_p)\leq d(Tx,T^{n_{p,\epsilon}}x)+d(T^{n_{p,\epsilon}}x,u_p)$, and therefore the sequence $\{d(Tx,u_p)\}_{p\in \mathbb N}$ is bounded, which is a contradiction. And hence, $$\lim_{p \to \infty}[\limsup_{n\to \infty}t_1d(T^nx,u_p)]=\infty$$
	i.e. $f(u_p)\to \infty$ as $p\to \infty$. Therefore, $f$ is a $d$-coercive mapping.
	Hence by Lemma \ref{u0b'}, there exists a $u_0 \in U$ such that $f(u_0) = \inf\limits_{x\in U}f(x)$. We shall prove, this $u_0$ is unique.
	
	If possible, suppose $u_1,u_2$ are two distinct elements in $U$ such that
	$f(u_1) = f(u_2) = \inf\limits_{x\in U}f(x)=\nu$. Then we have, 
	$$ \limsup_{n\to \infty}d(T^nx,p_1)=\limsup_{n\to \infty}d(T^nx,p_2)=\nu,$$ where $p_1\in c(T^nx,u_1,t_1)$ and $p_2\in c(T^nx,u_2,t_1)$.
	This implies that for each $\epsilon \in (0, 1]$ there exists $N \in \mathbb N$ such that
	$$d\big(T^nx, p_1\big) < \nu + \epsilon,\ \  d\big(T^nx, p_2\big) < \nu + \epsilon,\ \forall~ n \geq N.$$ From the definition of Hausdorff metric, for $T^nx$ and $ \epsilon'>0$, there is a $q\in c(p_2,p_1,t_1)$ such that $d(T^nx,q)\leq H(\{T^nx\},c(p_2,p_1,t_1))+\epsilon'.$
	Now, from the property $(B')$, for $n\geq N$
	\begin{align*}
	t_1 d(T^nx,q)&\leq t_1 H(\{T^nx\},c(p_2,p_1,t_1))+\epsilon_1 ,\ \text{ where }\epsilon_1=t_1\epsilon'\\
	&\leq t_1[H(\{T^nx\},c(T^nx,p_1,t_1))+H(c(p_2,p_1,t_1),c(T^nx,p_1,t_1))]+\epsilon_1\\
	&= t_1[H(c(T^nx,T^nx,1-t_1),c(p_1,T^nx,1-t_1))\\
	&\ \ \  +H(c(p_2,p_1,t_1),c(T^nx,p_1,t_1))]+\epsilon_1\\
	&\leq t_1[(1-t_1)d(T^nx,p_1)+t_1d(T^nx,p_2)]+\epsilon_1\\
	&<t_1(\nu+\epsilon)+\epsilon_1.
	\end{align*}
	This implies $f(q) \leq t_1\nu<\nu$, which is a contradiction. Hence, $u_1=u_2$.
	
	Since $T$ is an $(\alpha, \beta, \gamma)$-generalized hybrid, therefore, we have
	$$\alpha d(Tx_0, T^nx)^2 + (1-\alpha)d(x_0, T^nx)^2+\gamma d(T^nx,z_0)^2$$$$
	\leq \beta d(Tx_0,T^{n-1}x)^2 + (1-\beta+\gamma)d(x_0,T^{n-1}x)^2+(1+\gamma)d(T^{n-1}x,z_0)^2,$$
	where $x_0\in c(T^nx,Tu_0,t_1), z_0\in c(T^nx,u_0,t_1)$.
	Since $\alpha, \beta, \gamma \in \mathbb R$, we have
	$$\limsup_{n\to \infty} d(T^nx,x_0)^2 \leq \limsup_{n\to \infty} d(T^nx,z_0)^2.$$
	This implies,	$$ f(Tu_0)\leq f(u_0).$$
	Hence, $Tu_0=u_0.$
	
	The proof for the other direction is immediate.
\end{proof}

\begin{thm}
		Let $U$ be a nonempty, closed, convex subset in a complete, uniformly Takahashi convex metric space $(X,d)$ with the property $(B)$, and the distance function $d$ be Menger convex. Then, an $(\alpha, \beta, \gamma)$-generalized hybrid $T:U\to U$ has a fixed point if and only if there exists an $x \in U$ such that the sequence $\{T^nx\}$ is bounded.
\end{thm}
\begin{proof}
	By Theorem \ref{uwcmspa}, the space $X$ has the property $A$. Hence, the proof follows.
\end{proof}

\subsection{Metric Interval}
\vspace{1cm}
For $x,y$ in a Menger convex metric space, we denote the set $\bigcup\limits_{0\leq t\leq 1}B[x,td(x,y)]\cap B[y,(1-t)d(x,y)]$ by $[x,y]$, and call it a \textit{metric interval}.
\begin{thm}\label{[x,y]}
	For each pair of points $x,y$ in a complete, Menger convex metric space $X$, the metric interval  $[x,y]$ is a complete subset of $X$.
\end{thm}
\begin{proof}
	Let $\{x_n\}\subset [x,y]$ be a Cauchy sequence. Then two cases arise:\\
	Case 1: For all but finitely many $n's$, $\{x_n\}\subset S[x,td(x,y)]\cap S[y,(1-t)d(x,y)] $ for some fixed $t\in (0,1)$: \\
	Since $S[x,td(x,y)]\cap S[y,(1-t)d(x,y)]$ is closed, so $\{x_n\}$ is convergent in $[x,y]$.\\
	Case 2: The negation of the case 1 holds:\\
	So there exists a subsequence $\{x_{p_i}\}\subset \{x_n\}$ which is not in $S[x,td(x,y)]\cap S[y,(1-t)d(x,y)]$ for a fixed $t\in [0,1]$. Then there is a sequence $\{t_i\}\subset [0,1]$ such that
	\begin{equation}
	d(x,x_{p_i})=t_id(x,y)\ \text{and}\ d(y,x_{p_i})=(1-t_i)d(x,y).
	\label{[x,y]1}
	\end{equation}
	
	Then,
	\begin{align*}
	&|t_m-t_n|d(x,y)=|d(x,x_{p_m})-d(x,x_{p_n})|\leq d(x_{p_m},x_{p_n}),
	\end{align*}
	which implies $\{t_i\}\ \text{is a Cauchy sequence in}\ [0,1]$.
	Let $t_i\to t_0$ and $x_{p_i}\to x_0 \in X$. Then by (\ref{[x,y]1}),
	\begin{align*}
	&d(x,x_0)=t_0d(x,y)\ \text{and}\ d(y,y_0)=(1-t_0)d(x,y),
	\end{align*}
	which implies $x_0 \in S[x,t_0d(x,y)]\cap S[y,(1-t_0)d(x,y)]$ and hence the sequence $\{x_n\}$ is convergent in $ [x,y]$.
	
Thus, the set $[x,y]$ is complete.
\end{proof}
The set $[x,y]\subset X$ is bounded, but not totally bounded, e.g.

\begin{example}
	Let $x=e_1,y=2e_2$ be in the metric space $(l_{\infty},\|.\|_{\infty})$ and $t=1/2$. Then $S[x,td(x,y)]\cap S[y,(1-t)d(x,y)]$ contains a sequence
	$e_2, e_2+e_3, e_2+e_4,e_2+e_5,...$
	which does not have a convergent subsequence. Therefore, $[x,y]$ is not compact and hence not totally bounded.
	
\end{example}

A metric interval is not necessarily a Menger convex subset, e.g. in the metric space $(l_{\infty},\|\cdot\|_{\infty})$, some intervals are not convex.

\begin{thm}\label{chybridB'c}
	Let $x,y$ be two distinct points in a complete, Menger convex metric space $(X,d)$ with the property $(B')$, and the distance function $d$ be Menger convex. If the set $[x,y]$ is convex, then an $(\alpha, \beta, \gamma)-$generalized hybrid $T:[x,y]\to [x,y]$ has a fixed point.
\end{thm}
\begin{proof}
	The proof follows from Theorems \ref{hybridB'} and \ref{[x,y]}.
\end{proof}

Given a pair of distinct points $x,y$ in a Takahashi convex metric space $X$,  we write $\langle x, y \rangle_W = \{W(x,y,\lambda):\lambda \in (0,1)\}$, and
$[x,y]_W=\langle x, y \rangle_W \cup \{x,y\}$. A normed space $X$ is a Takahashi convex metric space, and for $x,y\in X,x\neq y,$ $t_1,t_2\in [0,1], t_1<t_2$; the points $z_1=t_1x+(1-t_1)y$ and $z_2=t_2x+(1-t_2)y$ lie in $[ x,y]_W$, and, $z_1\in  [z_2,y] _W $, $z_2\in  [z_1,x] _W $.
Also a CAT(0) sapce $X$ (see \cite{sm09}, \cite{pc06}, \cite{sd08}) is a Takahashi convex metric space with a Takahashi convex structure $tx\oplus(1-t)y$, and in this space the same betweenness property of the between points of $x,y$ holds, i.e., if $x,y\in X,x\neq y,$ $t_1,t_2\in [0,1], t_1<t_2$, then the points $z_1=t_1x\oplus(1-t_1)y$ and $z_2=t_2x\oplus(1-t_2)y$ lie in $ [x,y]_W$, and, $z_1\in  [z_2,y] _W $, $z_2\in  [z_1,x] _W $. Although, in a Menger convex metric space, this betweenness property generally does not hold. For example,
\begin{example} \label{ex1} 
	Let $x=(0,0,0,...), y=(2,0,0,0,...)$ be in the Menger convex space $X=(l_{\infty},\|\cdot\|_{\infty})$, and $t_1=\frac{1}{4}$, $t_2=\frac{1}{2}$. Then $S^{\cap}_{t_1}:= S[x,t_1d(x,y)]\cap S[y,(1-t_1)d(x,y)]
	\supset \{(x_i)\in X: x_1=\frac{1}{2},\ \frac{-1}{2}\leq x_i\leq \frac{1}{2}\ \forall\  i\geq 2\}$, and 
	$S^{\cap}_{t_2}:=S[x,t_2d(x,y)]\cap S[y,(1-t_2)d(x,y)]\supset \{(x_i)\in X: x_1=1,\ -1\leq x_i\leq 1\ \forall\  i\geq 2\}.$
	Let $x^1=(\frac{1}{2},0,0,0,...)\in S^{\cap}_{t_1}\ \text{and}\ x^2=(1,1,1,...)\in S^{\cap}_{t_2}$. Then,
	$d(x^1,x^2)+d(x^2,y)=2\neq d(x^1,y)=\frac{3}{2}$, and $d(x,x^2)+d(x^2,x^1)=2\neq d(x,x^1)=1/2$. This implies,
	$x^2\not\in [x,x^1]\cup[x^1,y]$. Also we have, $x^1\not \in [x,x^2]\cup [x^2,y]$.
	
\end{example}
Thus we have the following: 
\begin{prop}
	Let $x,y$ be a pair of distinct points in a Menger convex space, and $t_1,t_2\in (0,1)$. Consider $x^1\in S[x,t_1d(x,y)]\cap S[y,(1-t_1)d(x,y)]$ and $x^2\in S[x,t_2d(x,y)]\cap S[y,(1-t_2)d(x,y)]$. Then, $x^1$ may not be in the metric interval $[x,x^2]$ or $[x^2,y]$.
\end{prop}

\section{Conclusion}
We explored some fundamental results for Menger convex structure. We showed that the uniformly Takahashi convex metric spaces satisfy the property $(A)$, and therefore, they are uniformly Menger convex as well. As an immediate deduction, we also found that the uniformly convex normed spaces have the property $(A)$. We proved that the metric intervals in a complete Menger convex space are complete. For a metric $d$ on a real vector space $X$, satisfying $d(u,\lambda x + (1-\lambda)y)\leq \lambda d(u,x) + (1-\lambda)d(u,y)$ for all $x,y,u\in X, \lambda \in [0,1],$ we showed that $d$ is indeed induced by a norm.

Using the Hausdorff metric, we took a larger domain for the definition of the property $(B)$, and called this extended version of the definition as the property $(B')$. We achieved a nonempty intersection property for decreasing sequences of nonempty, bounded, closed, convex subsets in a complete Menger convex space with property $(B')$, which also implies that the complete Menger convex spaces with property $(B')$ are reflexive. Using the obtained nonempty intersection property, we acquired a fixed point result for a mapping weaker than $(\alpha, \beta)$-generalized hybrid mappings, defined on a Menger convex space with property $(B')$.

\end{document}